\newtheorem{definition}{Definition}
\newtheorem{theorem}{Theorem}
\newtheorem{lemma}{Lemma}
\newtheorem{proposition}{Proposition}
\def\de{\delta}
\def\al{\alpha}
\def\ga{\gamma}
\def\kappa{\varkappa}
\def\N{{\mathbb N}}
\def\M{{\cal M}}
\def\K{{\cal K}}
\def\GL{\operatorname{GL}}
\def\bm{\begin{pmatrix}}
\def\em{\end{pmatrix}}
\def\span{\operatorname{span}}
\def\rank{\operatorname{rank}}
\def\row{\operatorname{row}}
\def\beq{\begin{equation}}
\def\eeq{\end{equation}}
\def\bea{\begin{eqnarray*}}
\def\eea{\end{eqnarray*}}
\title{$\GL(n)$-dependence of matrices}
\author{N.~Tsilevich\thanks{Braude College of Engineering, Karmiel, Israel.
E-mail: {\tt natalia.tsilevich@gmail.com}.}
\and 
Y.~Manor\thanks{University of Haifa, Haifa, Israel. E-mail: {\tt yahel.manor49@gmail.com}.}
}
\author{N.~Tsilevich\thanks{Braude College of Engineering, Karmiel, Israel.
E-mail: {\tt natalia.tsilevich@gmail.com}.}
\and 
Y.~Manor\thanks{University of Haifa, Haifa, Israel. E-mail: {\tt yahel.manor49@gmail.com}.}
}
\date{}
\begin{document}
\maketitle

\begin{abstract}
We introduce the notion of $\GL(n)$-dependence of matrices, which is a generalization of  linear dependence taking into account the matrix structure. Then we prove a theorem, 
which generalizes, on the one hand, the fact that $n+1$ vectors in an $n$-dimensional vector space are linearly dependent and, on
the other hand, the fact that the natural action of the group $\GL(n,\K)$ on $\K^n\setminus\{0\}$ is transitive.
\end{abstract}

\section{Introduction}
There are various generalizations of the fundamental notion of linear (in)de\-pen\-dence of vectors, such as, for example, algebraic dependence in commutative algebra (see, e.g., \cite{Chamber}), the notion of matroid~\cite{Whitney} in combinatorics, forking~\cite{Shelah} in model theory, dominating in category theory~\cite{Isbell},   weak dependence~\cite{Hrbek}, $k$\nobreakdash-dependence~\cite{Feinberg}, etc. 
Recall that, given a vector space $V$ over a~field~$\K$, vectors $v_1,\ldots,v_k\in V$  are said to be linearly dependent if there are scalars $\al_1,\ldots,\al_k\in \cal K$ such that 
\beq\label{lindep}
\al_1v_1+\al_2v_2+\ldots+\al_kv_k=0, \quad \text{with not all $\al_i$'s zero}.
\eeq
If, for example, instead of vectors $v_i$'s we consider
elements $a_i$ of a field $\K$ and replace the linear equation~\eqref{lindep} by 
a polynomial equation with coefficients in  a~subfield $\cal L\subset \cal K$, we obtain 
the notion of algebraic dependence of $a_i$'s over~$\cal L$.

In this short note we generalize the notion of linear dependence and, correspondingly, Eq.~\eqref{lindep} in another direction, remaining fully in the framework of linear algebra. Namely, suppose that instead of vectors $v_i$'s we consider matrices $M_i$'s of the same order. Of course, the set of such matrices is a~linear space, so the ordinary definition of linear dependence applies. However, it does not take into account the matrix structure. In order to obtain a notion of dependence of matrices that does take into account the matrix structure, we replace multiplication by scalars with multiplication by matrices from the general linear group $\GL(n,\K)$. Namely, we introduce the following definition. 

For positive integers $n,m$, let $\M_{n\times m}$ be the set of $n\times m$ matrices over an arbitrary field $\K$, and let $\GL(n):=\GL(n,\K)$.

\begin{definition}\label{glndep}
Matrices $M_1,\ldots,M_k\in\M_{n\times m}$ are said to be {\rm $\GL(n)$-dependent} if there exist $g_1,\ldots, g_{m+1}\in\GL(n)\cup\{0\}$ such that
\beq\label{nado}
\sum\limits_{i=1}^{m+1}g_iM_i=0,\quad \text{with not all $g_i$'s zero}.
\eeq
\end{definition}

Observe that in the case of $n=1$ we obtain exactly the ordinary linear dependence of $m$-dimensional vectors (matrices of order $1\times m$).

Moreover, 
recall that any $m+1$ vectors in an $m$-dimensional space are linearly dependent. Our main result is the following theorem, which is a~direct analog of this claim for the case of $\GL(n)$\nobreakdash-dependence of matrices.

\begin{theorem}\label{maintheorem}
Any $m+1$ matrices from $\M_{n\times m}$ are $\GL(n)$-dependent.
\end{theorem}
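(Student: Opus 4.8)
The plan is to recast $\GL(n)$-dependence in terms of row spaces. For $g\in\GL(n)$ the matrix $gM_i$ has the same row space as $M_i$, and conversely two matrices in $\M_{n\times m}$ have the same row space if and only if one is carried to the other by left multiplication by a member of $\GL(n)$ (match bases of their equal column spaces and extend to $\K^n$). Thus the orbit $\GL(n)M_i$ is exactly the set of matrices with row space $R_i:=\row(M_i)$, and Definition~\ref{glndep} becomes the statement that some nonempty $A\subseteq\{1,\dots,m+1\}$ carries matrices $N_i$ with $\row(N_i)=R_i$ and $\sum_{i\in A}N_i=0$. The two-term case $A=\{i,j\}$ already works once $R_i=R_j$; for $m=1$ this is transitivity and for $n=1$ the whole statement degenerates to ordinary linear dependence, so the reformulation sees both special cases at once.

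To find a usable $A$ I would exploit a counting obstruction: the $m+1$ subspaces $R_1,\dots,R_{m+1}$ of the $m$-dimensional space $\K^m$ cannot be independent (if all are nonzero their dimensions already sum to at least $m+1>m$), so their sum is not direct. I would then shrink to a nonempty $A$ in which every $R_i$ is \emph{redundant}, i.e.\ $R_i\subseteq\sum_{j\in A\setminus\{i\}}R_j$ for all $i\in A$, by starting from the full index set and repeatedly discarding any $i$ with $R_i\not\subseteq\sum_{j\ne i}R_j$. The integer $|S|-\dim\sum_{j\in S}R_j$ never decreases under such a deletion and is positive at the start, which forces termination at a nonempty (in fact at least two-element) set.

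With $A$ fixed I would write each target as $N_i=B_iP_i$, where $P_i$ is a fixed matrix whose rows form a basis of $R_i$ and $B_i$ is to be determined; then $\row(N_i)=R_i$ precisely when $B_i$ has full column rank $r_i=\rank(M_i)$, while $\sum_{i\in A}N_i=0$ says that the rows of the block row $[\,B_i\,]_{i\in A}$ lie in the left null space $L_A$ of the stacked $P_i$. Redundancy of $A$ is exactly the assertion that $L_A$ surjects, block by block, onto each $\K^{r_i}$. Choosing $n$ elements of $L_A$ whose $i$-blocks span $\K^{r_i}$ for every $i\in A$ produces such $B_i$, hence matrices $N_i$ as required; finally, since $\row(N_i)=\row(M_i)$, the first paragraph supplies invertible $g_i$ with $g_iM_i=N_i$, and setting $g_i=0$ for $i\notin A$ completes the dependence.

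The hard part will be making this selection over an arbitrary field. When $\dim L_A=\sum_{i\in A}r_i-\dim\sum_{i\in A}R_i\le n$ one may simply take $L_A$ itself, so the real issue is guaranteeing a redundant $A$ with small enough $L_A$, using minimality of $A$ and the slack $|A|-\dim\sum_{i\in A}R_i\ge1$ that comes from having one more subspace than the dimension of their span. In the remaining cases one must instead find an $n$-dimensional subspace of $L_A$ meeting each block kernel trivially; over an infinite field a generic subspace works, but over small finite fields there are genuine degenerate configurations, which I expect to rule out by spending the extra matrix (we are given $m+1$, not $m$). Controlling this transversality uniformly in $\K$ is, I anticipate, the crux of the whole argument.
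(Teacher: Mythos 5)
Your row-space reformulation, the shrinking to a redundant set $A$, and the block decomposition $N_i=B_iP_i$ with rows of $[\,B_i\,]_{i\in A}$ in the left null space $L_A$ are all sound, and your surjectivity claim (redundancy of $A$ is equivalent to each projection $\pi_i\colon L_A\to\K^{r_i}$ being onto) is correct. But the gap you flag at the end is not a technicality to be controlled later --- it is fatal to the plan as structured, because the set $A$ your deletion rule produces can fail to admit the required selection \emph{at all}. Take $\K=\mathbb{F}_2$, $n=1$, $m=2$, and $M_1=M_2=M_3=\bm 1&0\em\in\M_{1\times 2}$. Every $R_i$ lies in the span of the others, so your procedure deletes nothing and returns $A=\{1,2,3\}$; here $L_A=\{(\al_1,\al_2,\al_3):\al_1+\al_2+\al_3=0\}$ contains no vector with all three coordinates nonzero, since the only candidate $(1,1,1)$ has coordinate sum $1\ne0$. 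The theorem holds, of course, via $A=\{1,2\}$, but that subset is invisible to your rule, and note that the slack $|A|-\dim\sum_{i\in A}R_i=2$ is as large as it can be, so ``spending the extra matrix'' does not rescue the step. Over a small finite field, locating a subset on which the simultaneous full-rank selection is possible is essentially the theorem itself, so the plan is circular exactly at its crux.

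Interestingly, you have the difficulty inverted relative to the paper. Over an \emph{infinite} field your plan does close: the condition ``$B_i$ has rank $r_i$'' is the nonvanishing of some $r_i\times r_i$ minor, a polynomial function on $L_A^n$ not identically zero (since $\pi_i$ is onto and $r_i\le n$, some $n$-tuple in $L_A$ has $i$-blocks spanning $\K^{r_i}$); the product of one such minor per $i\in A$ is a nonzero polynomial, and an infinite field supplies a point where it does not vanish. Written out, this is a genuine non-inductive alternative to the paper's infinite-field proof, which instead runs a double induction on $n$ and $m$: its condition~\eqref{***} is precisely your redundancy, its Step~1 is your deletion step, and its Step~2 replaces your genericity by perturbing diagonal coefficient matrices $g_j\mapsto g_j+xI$ and choosing $x$ outside the roots of finitely many determinant polynomials --- again using infinitude of $\K$. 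The finite case, which you predict to be the crux, is where the paper deploys a short mechanism entirely absent from your proposal: by Lemma~\ref{prop:aux} (Dumas--Gow--McGuire--Sheekey) there is an $n$-dimensional subspace $H\subset\M_{n\times n}$ with $H\subset\GL(n)\cup\{0\}$, and the linear map $H^{m+1}\to\M_{n\times m}$, $(g_1,\dots,g_{m+1})\mapsto\sum_i g_iM_i$, has nontrivial kernel because $(m+1)n>mn$. No transversality over small fields is needed; and since no such $H$ exists over, say, an algebraically closed field with $n\ge2$ ($\det$ is a nonconstant form on $H$), the two mechanisms are genuinely complementary. Two minor slips besides: handle $M_i=0$ separately (take $g_i=I$, the rest $0$) before asserting the dimension count, and in your first paragraph the bases to be matched are of the equal \emph{row} spaces, not column spaces.
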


The $n=1$ case of the theorem is exactly the claim mentioned above, while the $m=1$ case is essentially the transitivity of the action of $\GL(n)$ on~$\K^n\setminus\{0\}$ (see Lemma~\ref{lem:transitivity}). Thus, the theorem is a simultaneous generalization of both these facts.

Although the result is purely linear algebraic, the original motivation for it came from computer science theory. Namely,
a major open problem in circuit complexity is to prove lower bounds
on the depth of circuits solving problems in $P$ beyond $O\left(\log n\right)$, and 
one of the approaches to this problem is via the so-called KRW conjecture, which
claims that the circuit depth complexity of functions
of the form $f(g(i_{1}),g(i_{2}),\dots,g(i_{n}))$
is at least the sum of their depth complexities minus some small loss. 
Theorem 1 appeared as a conjecture in a joint project of the second author and O. Meir dealing with a simplified version of the KRW conjecture (known as the semi-montone composition), where it was needed for the proof of parity query complexity analog (For more details, see~\cite{ManorMeir}.)

The proof of the theorem for a finite field (Sec.~\ref{sec:finite}) is quite easy, while in the case of an infinite field, we need a more involved argument
 (Sec.~\ref{sec:infinite}). In Sec.~\ref{sec:lindep} we essentially restate Definition~\ref{glndep} and Theorem~\ref{maintheorem} in terms of linear subspaces instead of matrices.

\section{Proof of the theorem for finite fields}\label{sec:finite}

Let $\mathcal{K}$  be a {\it finite} field. In
order to prove the main theorem over $\K$, we need the following result.

\begin{lemma}[{\cite[Theorem 4]{GJJR10}}]\label{prop:aux} There exists a linear
subspace $H\subset\mathcal{M}_{n\times n}$ such that $\dim H=n$ and
every nonzero matrix $M\in H$ is of full rank.
\end{lemma}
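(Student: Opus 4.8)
The plan is to realize the desired subspace as the image of a degree-$n$ field extension under its regular representation. Since $\K$ is finite, I write $\K=\mathbb{F}_q$, and for the given $n$ I take the extension $L=\mathbb{F}_{q^n}$, which is a field of degree $n$ over $\K$ and hence an $n$-dimensional $\K$-vector space. Fixing a $\K$-basis of $L$, each $\al\in L$ yields the $\K$-linear multiplication map $x\mapsto\al x$ on $L$, whose matrix in that basis I denote $M_\al\in\M_{n\times n}$. I would then set $H:=\{M_\al:\al\in L\}$ and check the two required properties.

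For the dimension, the map $\al\mapsto M_\al$ is $\K$-linear, since $M_{\al+\beta}=M_\al+M_\beta$ and $M_{c\al}=cM_\al$ for $c\in\K$, and it is injective because $M_\al=0$ forces $\al=\al\cdot 1=0$. Hence $H$ is a linear subspace with $\dim H=\dim_\K L=n$.

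For the full-rank property, I would exploit that $L$ is a field. The assignment $\al\mapsto M_\al$ is also multiplicative, $M_\al M_\beta=M_{\al\beta}$, with $M_1=I$. Thus if $\al\neq 0$, then $\al$ is invertible in $L$ and $M_\al M_{\al^{-1}}=M_1=I$, so $M_\al$ is invertible, i.e.\ of full rank. Therefore every nonzero matrix of $H$ has full rank, as required.

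I do not anticipate a real obstacle here; the only step that genuinely uses finiteness is the existence of a degree-$n$ extension, which is supplied by $\mathbb{F}_{q^n}$ (over a general field no such $H$ need exist, e.g.\ over an algebraically closed field). If one prefers an explicit model, I would instead fix an irreducible polynomial $f$ of degree $n$ over $\K$, let $C$ be its companion matrix, and take $H:=\span\{I,C,\dots,C^{n-1}\}=\K[C]$; since $\K[C]\cong\K[x]/(f)$ is a field, the identical two verifications go through.
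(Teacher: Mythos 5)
Your proof is correct and complete: the regular representation of $\mathbb{F}_{q^n}$ over $\K=\mathbb{F}_q$ (equivalently, your companion-matrix model $\K[C]\cong\K[x]/(f)$) gives an $n$-dimensional subspace of $\M_{n\times n}$ in which every nonzero element is invertible, and all the verifications you carry out are accurate. The paper itself offers no proof, citing \cite{GJJR10} instead, and your field-extension construction is exactly the classical argument underlying that cited theorem, including the correct observation that finiteness of $\K$ enters only through the existence of a degree-$n$ field extension (equivalently, of an irreducible polynomial of degree $n$).
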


In other words, $H\subset\GL(n)\cup\{0\}$.

\begin{proposition}\label{prop:finite}
Theorem~\ref{maintheorem} holds for a finite field $\K$.
\end{proposition}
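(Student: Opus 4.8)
The plan is to exploit Lemma~\ref{prop:aux} so as to reduce the statement to a pure dimension count. The idea is to look for the coefficients $g_1,\dots,g_{m+1}$ not among all of $\GL(n)\cup\{0\}$, but only inside the distinguished subspace $H\subset\M_{n\times n}$ furnished by the lemma. The point of this restriction is that membership in $H$ already guarantees admissibility: every nonzero element of $H$ is invertible, so $H\subset\GL(n)\cup\{0\}$, and hence any tuple $(g_1,\dots,g_{m+1})\in H^{m+1}$ automatically meets the requirement $g_i\in\GL(n)\cup\{0\}$ of Definition~\ref{glndep}.

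With the search confined to $H$, I would define the linear map
\[
\Phi\colon H^{m+1}\to\M_{n\times m},\qquad \Phi(g_1,\dots,g_{m+1})=\sum_{i=1}^{m+1}g_iM_i,
\]
which is $\K$-linear in the tuple $(g_1,\dots,g_{m+1})$. Its domain $H^{m+1}$ has dimension $(m+1)\dim H=(m+1)n$, while its codomain $\M_{n\times m}$ has dimension $nm$. Since $(m+1)n=nm+n>nm$, the map $\Phi$ cannot be injective, so its kernel is nontrivial. Any nonzero element $(g_1,\dots,g_{m+1})$ of $\ker\Phi$ then yields $\sum_{i=1}^{m+1}g_iM_i=0$ with not all $g_i$ equal to zero and with each $g_i\in H\subset\GL(n)\cup\{0\}$, which is exactly the $\GL(n)$-dependence of $M_1,\dots,M_{m+1}$ asserted by Theorem~\ref{maintheorem}.

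The entire content of the argument is thus packed into the existence of a subspace $H$ of dimension exactly $n$ inside $\M_{n\times n}$ all of whose nonzero elements are invertible; the surplus dimension $n$ it provides is precisely what makes the codomain too small and forces a nontrivial relation. I expect this existence to be the only real obstacle, and it is exactly the step special to finite fields: Lemma~\ref{prop:aux} supplies such an $H$ (for instance via the regular representation of the field $\mathbb{F}_{q^n}$ by $n\times n$ matrices over $\mathbb{F}_q=\K$), whereas over an infinite field such as $\C$ no subspace of dimension greater than $1$ can avoid singular matrices, which is why a separate and more involved argument is needed in the infinite case.
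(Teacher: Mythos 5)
Your proposal is correct and is essentially identical to the paper's own proof: both restrict the coefficients to the subspace $H$ from Lemma~\ref{prop:aux} and apply the dimension count $(m+1)n>mn$ to the linear map $(g_1,\dots,g_{m+1})\mapsto\sum_i g_iM_i$ to produce a nonzero kernel element. Your added remark that $H$ can be realized via the regular representation of $\mathbb{F}_{q^n}$ over $\K=\mathbb{F}_q$ is a nice concrete instantiation of the lemma, which the paper simply cites.
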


\begin{proof}
Let $H$ be the the linear subspace from Proposition~\ref{prop:aux}. Given matrices $M_1,\ldots, M_{m+1}\in\M_{n\times m}$, consider the linear function 
$f\colon H^{m+1}\to\M_{n\times m}$ defined by
\[
f\left(g_{1},\dots, g_{m+1}\right)=\sum_{i=1}^{m+1}g_{i}M_{i},\qquad g_i\in H.
\]
Denoting by $\text{dom}f$ and $\text{img}f$ the domain and the image of $f$, respectively, it is easy to see that
\[
\dim\text{dom}f=\left(m+1\right)n> mn\ge\dim\text{img}f.
\]
Hence, there exists a nonzero assignment $(g_{1},\dots, g_{m+1})\in H^{m+1}$ such
that $f(g_{1},\dots,g_{m+1})=0$. 
Since $H\subset\GL(n)\cup\{0\}$, this completes the proof. 
\end{proof}

\section{Proof of the theorem for infinite fields}\label{sec:infinite}

The aim of this section is to prove the main theorem in the more difficult case of an infinite field. We need the following easy lemmas.

\begin{lemma}
Let $v_1,\ldots,v_{k}$ be vectors in an arbitrary linear space such that there exist scalars $\al_1,\ldots,\al_k$, not all zeros, such that 
$\sum\limits_{i=1}^{k}\al_iv_i=0$. 
For a~fixed~$j$, if  $\al_j\ne0$ for every expansion of this form, then the vectors $v_i$, $i\ne j$, are linearly independent.
\label{lemma}
\end{lemma}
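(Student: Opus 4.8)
The plan is to argue by contradiction, exploiting the logical structure of the hypothesis. The assumption that ``$\al_j\ne0$ for every expansion of this form'' means precisely that every nontrivial linear relation $\sum_{i=1}^{k}\al_iv_i=0$ among $v_1,\ldots,v_k$ must involve $v_j$ with a nonzero coefficient. The conclusion to be established is the linear independence of the subfamily $\{v_i : i\ne j\}$, so I expect a contrapositive-style argument to be the natural route.

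First I would suppose, toward a contradiction, that the vectors $v_i$ with $i\ne j$ are linearly dependent. Then there exist scalars $\beta_i$, $i\ne j$, not all zero, satisfying $\sum_{i\ne j}\beta_iv_i=0$. Next I would reinterpret this as a relation among the full family by adjoining the coefficient $\beta_j:=0$ attached to $v_j$. This produces $\sum_{i=1}^{k}\beta_iv_i=0$, which is nontrivial, since not all the $\beta_i$ with $i\ne j$ vanish, yet has vanishing $j$-th coefficient. That contradicts the hypothesis, and hence the subfamily $\{v_i:i\ne j\}$ must be independent.

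The argument is short and I anticipate no genuine obstacle; the only point requiring care is the correct reading of the quantifier in the hypothesis, namely that the nonvanishing of $\al_j$ is asserted for \emph{all} nontrivial relations, not merely for the single one whose existence is posited at the outset. Once this is understood, the contrapositive is immediate: a dependence among the $v_i$ with $i\ne j$ is exactly a nontrivial relation among all the $v_i$ that avoids $v_j$, which the hypothesis forbids.
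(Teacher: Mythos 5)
Your argument is correct and is exactly the argument the paper has in mind: the paper's own proof simply says the lemma ``immediately follows from the definition of linear (in)dependence,'' and your contrapositive step --- extending a nontrivial relation among $\{v_i\}_{i\ne j}$ by $\beta_j=0$ to contradict the hypothesis --- is the natural spelling-out of that one-liner. You also correctly flag the only subtle point, the universal quantifier over all nontrivial relations, so nothing is missing.
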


\begin{proof}
Immediately follows from the definition of linear (in)dependence.
\end{proof}

\begin{lemma}\label{lem:transitivity}
For every $w_1,w_2\in\M_{n\times 1}$ there exist $g_1,g_2\in\GL(n)\cup\{0\}$ such that at least one of them is nonzero and $g_1w_1+g_2w_2=0$.
\label{trans}
\end{lemma}

\begin{proof}
If one of the vectors, say $w_1$, is zero, then we take $g_2=0$ and $g_1$ an arbitrary matrix from $\GL(n)$. Further, it is well known that the action of $\GL(n)$ on $\K^n\setminus\{0\}$ is transitive, so,  if $w_1,w_2\ne0$, then there exists $g\in\GL(n)$ such that $gw_1=w_2$, and we take $g_1=g$ and $g_2=-I$ where $I$ is the identity matrix.
\end{proof}

Note that this lemma is the special case $m=1$ of the main theorem. Now we are ready to prove it for infinite fields.

\begin{theorem}
Theorem~\ref{maintheorem} holds for an infinite field $\K$.
\end{theorem}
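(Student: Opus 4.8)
The plan is to decide in advance which of the coefficients $g_i$ will be invertible and which will be set to $0$, and then to exploit the infinitude of $\K$ to realize all the chosen ones as invertible matrices simultaneously. The organizing observation is that left multiplication by $\GL(n)$ preserves row spaces: writing $V_i:=\row(M_i)\subseteq\K^m$, one has $\row(gM_i)=V_i$ for every $g\in\GL(n)$, whereas $\{gM_i:g\in\M_{n\times n}\}$ is precisely the space of $n\times m$ matrices all of whose rows lie in $V_i$. Hence, for an index set $T\subseteq\{1,\dots,m+1\}$, the projection to the $i_0$-th coordinate of the solution space $\{(g_i)_{i\in T}:\sum_{i\in T}g_iM_i=0\}$ contains an invertible matrix if and only if $V_{i_0}\subseteq\sum_{i\in T\setminus\{i_0\}}V_i$. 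I would therefore reduce the theorem to producing a nonempty \emph{self-dependent} set $T$, by which I mean $V_i\subseteq\sum_{j\in T\setminus\{i\}}V_j$ for every $i\in T$, and then solving $\sum_{i\in T}g_iM_i=0$ with all $g_i$ ($i\in T$) invertible and all remaining coefficients equal to $0$.

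For the combinatorial step put $f(T)=\dim\sum_{i\in T}V_i$. This is monotone and satisfies $f(\{1,\dots,m+1\})\le m<m+1$, so some nonempty $T$ has $f(T)<|T|$; I take a minimal such $T$. For each $i\in T$ the set $T\setminus\{i\}$ is a proper subset, so minimality gives $f(T\setminus\{i\})\ge|T|-1$, while monotonicity gives $f(T\setminus\{i\})\le f(T)\le|T|-1$; thus $f(T\setminus\{i\})=f(T)$, which is exactly $V_i\subseteq\sum_{j\in T\setminus\{i\}}V_j$. So $T$ is self-dependent. (If $|T|=1$ this forces $V_i=0$, i.e.\ $M_i=0$, the trivial case.) This is a circuit-type existence statement and is valid over any field.

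The one genuinely delicate point, and the only place the hypothesis that $\K$ is infinite is used, is to pass from ``each chosen coefficient can be made invertible on its own'' to ``all chosen coefficients invertible at once''. Let $W=\{(g_i)_{i\in T}:\sum_{i\in T}g_iM_i=0\}$ and let $\pi_i\colon W\to\M_{n\times n}$ be the coordinate projections. By the reduction above, self-dependence guarantees that for each $i\in T$ some point of $W$ has invertible $i$-th coordinate, so $\det\circ\pi_i$ is not identically zero on the vector space $W$. Since a polynomial ring over a field is an integral domain, the product $\prod_{i\in T}\det(\pi_i)$ is again a nonzero polynomial, and because $\K$ is infinite it does not vanish identically on $W$. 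Choosing $w\in W$ with $\prod_{i\in T}\det(\pi_i(w))\ne0$ and setting $g_i=\pi_i(w)$ for $i\in T$ and $g_i=0$ otherwise yields coefficients in $\GL(n)\cup\{0\}$, not all zero, with $\sum_{i=1}^{m+1}g_iM_i=0$. I expect this simultaneous-invertibility step to be the main obstacle: its failure over finite fields (where a nonzero polynomial may vanish everywhere) is exactly what forces the separate treatment of Section~\ref{sec:finite}, and its $|T|=2$ instance is precisely the transitivity used in Lemma~\ref{trans}.
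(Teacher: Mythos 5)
Your proposal is correct, and it takes a genuinely different route from the paper's. The paper proceeds by double induction (outer on $n$, inner on $m$): it applies the $n=1$ case row by row to get diagonal coefficients $g_i$, reduces $m$ (via an isomorphism onto $\K^r$) when some row of one matrix lies outside the span of all rows of the others, and then repairs the singular ``bad'' coefficients one at a time, perturbing $g_j\mapsto g_j+xI$ with a compensating correction on the other indices, where $x$ is chosen outside the roots of finitely many polynomials of degree at most $n$ --- this choice being where infiniteness of $\K$ enters, once per bad index. You avoid induction entirely: your rank function $f(T)=\dim\sum_{i\in T}\row M_i$ extracts a minimal circuit-like (self-dependent) set $T$, the observation that $\{gM:g\in\M_{n\times n}\}$ is exactly the set of matrices whose rows lie in $\row M$ correctly yields your key equivalence (for the ``if'' direction take $g_{i_0}=I$ and split each row of $-M_{i_0}$ along the $V_i$'s; for ``only if'' compare row spaces), so each $\det\circ\pi_i$ is a nonvanishing polynomial function on the solution space $W$, and a single application of the fact that a product of nonzero polynomials over an infinite field has a common nonvanishing point delivers all invertibilities simultaneously. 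This is arguably cleaner and more conceptual: it isolates the use of infiniteness in one Zariski-density step (where the paper distributes the same mechanism --- avoiding roots of determinant polynomials --- over its correction loop), it replaces the paper's $m$-reduction step by the minimality of $T$, and it speaks directly the language of the row-space reformulation in Section~4. Two cosmetic points: to make the polynomial argument formal you should fix a basis of $W$ so that each $\det\circ\pi_i$ becomes a genuine polynomial on $\K^{\dim W}$; and the case $|T|=1$ need not be excluded, since there self-dependence forces $\row M_i=\{0\}$, i.e.\ $M_i=0$, and the same argument applies with $W=\M_{n\times n}$.
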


\smallskip\noindent{\bf Remark.} Obviously, the theorem remains valid if the number of matrices is greater than $m+1$ (we can simply take $g_i=0$ for ``superfluous'' matrices).

\begin{proof}
Observe that if $M_j=0$ for some $j$, then we can take $g_j=I$ and $g_i=0$ for $i\ne j$ to obtain~\eqref{nado}. So, {\it in what follows we assume that $M_i\ne0$ for all $i$.}

The proof proceeds by double induction, the outer induction on $n$ and the inner one on $m$.

As mentioned in the introduction, the {\bf base case  $\mathbf{n=1}$ of the outer induction} is exactly the fundamental theorem that $m+1$ vectors in an $m$-dimensional space are  linearly dependent.

\smallskip
{\bf Induction step of the outer induction} proceeds by  induction on~$m$. 

{\bf Base case $\mathbf{m=1}$ of the inner induction}  is covered by Lemma~\ref{trans}. 

{\bf Induction step of the inner induction}.
To make the idea of the proof clear,  we 
 first consider the {\bf case $\mathbf{n=2}$}. The proof of the general case essentially repeats the same argument, but with more complicated notation.
 
 For $n=2$ we have $w_1,\ldots,w_n\in\M_{2\times m}$, i.e., $w_i=\bm u_i \\ v_i\em$ with $u_i,v_i\in\K^m$. 

By the $n=1$ case, there exist $a_1,\ldots,a_{m+1}\in\K$  and $b_1,\ldots,b_{m+1}\in\K$ such that 
\begin{eqnarray}
\sum\limits_{i=1}^{m+1}a_iu_i&=&0,\qquad\text{with not all $a_i$'s zero},\label{a}\\
\sum\limits_{i=1}^{m+1}b_iv_i&=&0,\qquad\text{with not all $b_i$'s zero}.\label{b} 
\end{eqnarray}
For each $i\in[m+1]$, consider the matrix $g_i=\bm a_i & 0\\0&b_i\em$. Then 
$$
\sum\limits_{i=1}^{m+1}g_iw_i=\sum\limits_{i=1}^{m+1}\bm a_i & 0\\0&b_i\em\bm u_i \\ v_i\em=\bm\sum\limits_{i=1}^{m+1}a_iu_i\\
\sum\limits_{i=1}^{m+1}b_iv_i\em=0.
$$ 
Let us say that an index $j$ is {\it bad} if $\det g_j=0$, and {\it good} otherwise. If all indices are good,  
then we are done.

\smallskip
{\bf Step 1}: Assume that there is an index $j$ such that  
\beq
u_j\notin \span\{u_i,v_i\}_{i\ne j}.\label{0}
 \eeq
Consider the vector $v_j$. 
If there is no expansion~\eqref{b} with $b_j=0$, then,
by Lemma~\ref{lemma}, the vectors $v_i$, $i\ne j$, are linearly independent. But there are $m$ of them, so  $\span\{v_i\}_{i\ne j}=\K^m$,  a contradiction with~\eqref{0}.
Therefore, we can find an expansion~\eqref{b} with $b_j=0$. Then we have $g_j=0$. 

Denote $V=\span\{u_i,v_i\}_{i\ne j}$.
It follows from~\eqref{0} that $r:=\dim V\le m-1$. 
Let $\phi:V\to\K^r$ be an isomorphism of linear spaces. Denote by~$\phi^2$ the corresponding isomorphism $V^2\to\M_{2\times r}$, i.e., $\phi^2\big(\bm u \\ v\em\big)=\bm \phi(u) \\ \phi(v)\em$. Observe that $\phi^2$ commutes with every $g\in\GL(2)$.
Now,
denoting $w'_i=\phi^2(w_i)$, we see that $\{w'_i\}_{i\ne j}$ is a family of $m\ge r+1$ matrices from $\M_{2\times r}$. By the induction hypothesis, there exist  $h_i\in\GL(2)\cup\{0\}$, $i\ne j$, with not all $h_i$'s zero, such that $0=\sum_{i\ne j}h_iw'_i=\sum_{i\ne j}h_i\phi^2(w_i)=\phi^2(\sum_{i\ne j}h_i w_i)$, which implies that $\sum_{i\ne j}h_i w_i=0$. So, taking $g_i=h_i$ for $i\ne j$ and $g_j=0$, we obtain~\eqref{nado}.

In the same way we treat the case where there exists $j$ such that $v_j\notin \span\{u_i,v_i\}_{i\ne j}$.

\smallskip
Thus, from now on we assume that $u_j,v_j\in\span\{u_i,v_i\}_{i\ne j}$ for all $j$.

\smallskip
{\bf Step 2.}
Now we will successively consider all bad indices, at each step 
``correcting'' the current matrices $g_i=\bm a_i& c_i\\ d_i &b_i\em$ so that (i) the equation $\sum g_iw_i=0$ is preserved; (ii) if $i$ was good, then it remains good.

Let $j$ be a bad index (i.e., $\det g_j=0$).
 Recall that $u_j,v_j\in \span\{u_i,v_i\}_{i\ne j}$, i.e.,
$u_j=\sum\limits_{i\ne j}(\al_iu_i+\ga_iv_i)$, $v_j=\sum\limits_{i\ne j}(\de_iu_i+\beta_iv_i)$ for some scalars $\al_i,\beta_i,\gamma_i,\delta_i$. 

Now change the $g_i$'s as follows (a nonzero constant $ x\in\K$ is to be chosen later):
$$
g'_j:=g_j+x\bm 1 & 0\\0&1\em,\qquad g'_i:=g_i-x \bm \al_i& \ga_i\\ \de_i&\beta_i\em\quad\text{for $i\ne j$}.
$$
Then
\bea
\sum_{i=1}^{m+1} g'_iw_i&=&\sum_{i=1}^{m+1} g_iw_i+\bm x & 0\\0&x\em\bm u_j\\v_j\em+\sum_{i\ne j} \bm -x\al_i& -x\ga_i\\-x\de_i&-x\beta_i\em\bm u_i\\v_i\em\\
&=&0+\bm xu_j-x\sum_{i\ne j}(\al_iu_i+\ga_iv_i)\\ xv_j-x\sum_{i\ne j}(\de_iu_i+\beta_iv_i)\em=0.
\eea

We want to ensure that (a) $\det g'_j\ne0$; (b)
 for $i\ne j$, if $\det g_i\ne0$ then $\det g'_i\ne0$.
 
But $\det g'_j=\det g_j+x(a_j+b_j)+x^2=x(a_j+b_j)+x^2$, so the condition $\det g'_j\ne0$ forbids at most two values for $x$. 

Further, $\det g'_i=\det g_i-x(\al_ib_i+\beta_ia_i-\ga_id_i-\de_ic_i)+x^2(\al_i\beta_i-\ga_i\de_i)$ for $i\ne j$, so if $\det g_i\ne0$, then the condition  $\det g'_i\ne0$ also forbids at most two values for $x$.
 
Therefore, the field being infinite, we can find $x$ as required.

After each step of this procedure, we
denote $g'_i$ again by $g_i$ (note that conditions (i)--(ii) are satisfied, and the number of  bad indices has decreased by one) and proceed to the next bad index. Thus, successively applying this procedure to all bad indices, we obtain an expansion of the required form with $g_i\in\GL(2)$ for all $i$, which completes the proof of the case $n=2$. 

\smallskip
{\bf General case}. 
Now we have $M_1,\ldots,M_{m+1}\in\M_{n\times m}$, i.e., $M_i=\bm u_i^{(1)}\\ \ldots\\ u_i^{(n)}\em$ with
$u_i^{(k)}\in\K^m$. 

By the $n=1$ case, there exist  $a_i^{(k)}\in\K$, $i=1,\ldots, m+1$, $k=1,\ldots, n$, such that for each $k$
\beq
\sum\limits_{i=1}^{m+1}a_i^{(k)}u_i^{(k)}=0,\qquad\text{with not all $a_i^{(k)}$'s zero}.\label{aa} 
\eeq

Let $g_i$ be the $n\times n$ diagonal matrix with diagonal entries $a_i^{(k)}$. Then $\sum g_iM_i=0$. 
We say that an index $j$ is {\it bad} if $\det g_j=0$, and {\it good} otherwise.
If all indices are good, then we are done.  

\smallskip
Assume that there is an index $j$ such that: 
\beq
\exists\ell\in[n] \text{ such that }u^{(\ell)}_j\notin\span\{u^{(k)}_i\}_{k\in[n],\,i\ne j}.\label{*}
\eeq
Then, exactly as at Step~1 above, for each $k\ne\ell$ we  find an expansion~\eqref{aa} with $a_j^{(k)}=0$ and
obtain $g_j=0$, and then apply the induction hypothesis.

\smallskip
So, from now on we assume that for all  indices $j$  we have 
\beq
u_j^{(\ell)}\in\span\{u^{(k)}_i\}_{k\in[n],\,i\ne j}\quad\text{ for all }\ell\in[n].\label{***}
\eeq

Now, as at Step~2 above, we will successively consider all bad indices, at each step 
``correcting'' the current matrices $g_i=(a^{(i)})_{r,s=1}^n$ so that (i) the equation $\sum g_iM_i=0$ is preserved; (ii) if an index was good, it remains good.

So, let $j$ be a bad index. 
For each $\ell\in[n]$, by~\eqref{***} we have
$u_j^{(\ell)}\in\span\{u^{(k)}_i\}_{k\in[n],\,i\ne j}$, that is, $u_j^{(\ell)}=\sum_{k\in[n],\,i\ne j}\al_{ik}^{(\ell)}u^{(k)}_i$ for some scalars $\al_{ik}^{(\ell)}$.
Change the current $g_i$'s as follows (a~nonzero constant $x$ is to be chosen later):
$$
g'_j:=g_j+xI,\qquad g'_i:=g_i-x\sum_{\ell=1}^n\sum_{k=1}^n\al_{ik}^{(\ell)}E_{\ell k}\quad\text{ for } i\ne j,
$$
where $I$ is the $n\times n$ identity matrix and $E_{rs}$ is a matrix unit. Then it is easy to see that 
$\sum g'_iM_i=0$.

We want to ensure that (a) $\det g'_j\ne0$; (b)
 for $i\ne j$, if $\det g_i\ne0$ then $\det g'_i\ne0$.
 
Condition~(a) has the form $P(x)\ne0$ where $P$ is a polynomial in $x$ with leading term $x^{n}$, while condition~(b) for a fixed $i$ has the form $P(x)\ne0$ where $P(x)$ is a polynomial in $x$ of degree at most $n$ with free term $\det g_i\ne0$. So,
each of the conditions forbids at most $n$ values for $x$ and, therefore, the field being infinite, we can find a suitable $x$. 

After each step of this procedure, we
denote $g'_i$ again by $g_i$ (note that conditions (i)--(ii) are satisfied, and the number of  bad indices has decreased by one) and proceed to the next bad index. Thus, successively applying this procedure to all bad indices, we obtain an expansion of the required form with $g_i\in\GL(n)$ for all $i$, which completes the proof. 
\end{proof}

Thus, Theorem~\ref{maintheorem} is proved in full generality.

\section{$\GL(n)$-dependence of subspaces}\label{sec:lindep}

In this section we restate our definition and the main theorem in terms of subspaces.

Given a matrix $M\in\M_{n\times m}$, denote by $\row M$ its row space, i.e., the subspace in $\K^m$ spanned by the rows of $M$.  The following lemma is well known.

\begin{lemma}\label{class}
Let $M_1,M_2\in\M_{n\times m}$. Then 
$M_2=gM_1$ where $g\in\GL(n)$ if and only if $\row M_1=\row M_2$.
\end{lemma}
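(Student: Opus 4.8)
I would prove the two implications separately, noting that the ``only if'' direction is immediate while all the substance lies in the ``if'' direction, which I would reduce to the standard theory of row equivalence. For the forward direction, suppose $M_2=gM_1$ with $g\in\GL(n)$. Since left multiplication by $g$ writes each row of $M_2$ as a linear combination of the rows of $M_1$, we get $\row M_2\subseteq\row M_1$; applying the same observation to $M_1=g^{-1}M_2$ (with $g^{-1}\in\GL(n)$) gives the reverse inclusion, so $\row M_1=\row M_2$.

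For the backward direction, assume $\row M_1=\row M_2=:W$ and set $r=\dim W$. The key idea is to pass both matrices to their reduced row echelon form (RREF). Recall that Gaussian elimination realizes the RREF of an $n\times m$ matrix $M$ as $PM$ for a suitable product $P$ of elementary matrices, hence $P\in\GL(n)$. Moreover, the RREF is uniquely determined by $M$, and---this is the crucial point---its $r$ nonzero rows are exactly the canonical echelon basis of $\row M$, while its remaining $n-r$ rows vanish; thus the full $n\times m$ RREF depends only on the row space $W$ (and on $n$), not on the particular matrix. Since $M_1$ and $M_2$ share the same row space and the same size, they therefore have a common RREF $R$: there are $P_1,P_2\in\GL(n)$ with $P_1M_1=R=P_2M_2$. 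Taking $g=P_2^{-1}P_1\in\GL(n)$ yields $gM_1=P_2^{-1}R=M_2$, as required.

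The only step needing care---and hence the main obstacle---is the claim that the complete $n\times m$ RREF is a function of $W$ alone, so that $M_1$ and $M_2$ genuinely land on the same $R$. This rests on the uniqueness of the reduced row echelon form (its pivot columns and pivot rows are intrinsic to the row space) together with the observation that $r=\dim W\le n$ always holds, so the zero-row padding matches for both matrices; everything else is bookkeeping. As an alternative to invoking the RREF, one could argue directly by choosing a basis of $W$, extending the resulting row configurations, and assembling $g$ block-wise, but the RREF formulation packages the row dependencies most cleanly and avoids the attendant case analysis.
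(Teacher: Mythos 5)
Your proof is correct, but it takes a genuinely different route from the one in the paper's source. For the forward direction you argue directly with row spaces: left multiplication by $g$ expresses rows of $M_2$ through rows of $M_1$, and $g^{-1}$ gives the reverse inclusion. The paper instead works with kernels, showing $M_2=gM_1$ forces $\ker M_1=\ker M_2$, and for the converse it picks a maximal linearly independent set of columns of $M_1$, uses kernel equality to see that the columns of $M_2$ in the same positions are independent and satisfy the identical dependency relations, and then invokes the transitivity of $\GL(n)$ on tuples of linearly independent vectors in $\K^n$ to build $g$ column by column. You replace all of this with the uniqueness of the reduced row echelon form: $P_1M_1=R=P_2M_2$ with $P_1,P_2\in\GL(n)$, and $g=P_2^{-1}P_1$. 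Your route buys cleaner bookkeeping --- no ``without loss of generality'' reordering of columns, no case analysis, and the zero-row padding automatically handles rank $r<n$ --- but it outsources the real content to the theorem that the RREF is an invariant of the row space alone (equivalently, that its nonzero rows are the canonical echelon basis of $W$), which you correctly flag as the crucial step and which itself requires a proof or a citation. The paper's construction is self-contained modulo only the standard fact that $\GL(n)$ maps any independent $r$-tuple in $\K^n$ to any other, which is in the same spirit as Lemma~\ref{lem:transitivity}; on the other hand, it tacitly relies on the duality $\row M_1=\row M_2\iff\ker M_1=\ker M_2$ (via double annihilators), a step your version avoids entirely by staying with row spaces throughout. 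Both arguments are valid over an arbitrary field.
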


Therefore, a $\GL(n)$-orbit in $\M_{n\times m}$ is determined by a linear subspace $L\subset\K^m$ and consists of all matrices $M$ with $\row M=L$. This suggests the following definition.

\begin{definition}
Subspaces $L_1,\ldots,L_k\subset\K^m$ are said to be {\rm $\GL(n)$-dependent} if there exist $x^{(i)}_j\in L_i$ 
for $i=1,\ldots, k$, $j=1,\ldots n$,  such that
\begin{itemize}
\item[\rm(a)] 
$$
\sum_{i=1}^k x^{(i)}_j=0,\quad j=1,\ldots,n;
$$
\item[\rm(b)] 
$$
\span\{x^{(i)}_j\}_{j=1}^n\text{ is either }L_i\text{ or } \{0\}, \quad{i=1,\ldots,k}, \text { not all }\{0\}.
$$
\end{itemize}
\end{definition}

Thus, given matrices $M_1,\ldots,M_k\in\M_{n\times m}$, we see that they are $\GL(n)$\nobreakdash-de\-pen\-dent if and only if the row spaces of these matrices are $\GL(n)$-dependent. 

In these terms, Theorem~\ref{maintheorem} states the following.

\begin{theorem}
For every $n\in\N$,
any $m+1$ subspaces in $\K^m$ of dimension at most $n$ are $\GL(n)$\nobreakdash-dependent.
\end{theorem}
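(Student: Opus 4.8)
The plan is to recognize this statement as nothing but Theorem~\ref{maintheorem} rewritten in the language of subspaces, and to prove it by passing through the row-space correspondence developed in this section. The bridge is the equivalence recorded just above: matrices $M_1,\ldots,M_{m+1}\in\M_{n\times m}$ are $\GL(n)$-dependent if and only if their row spaces $\row M_1,\ldots,\row M_{m+1}$ are $\GL(n)$-dependent. I would make this equivalence explicit and then realize the given subspaces as row spaces, so that Theorem~\ref{maintheorem} can be applied directly.

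Concretely, I would start from subspaces $L_1,\ldots,L_{m+1}\subset\K^m$ with $\dim L_i\le n$ for every $i$. Because each $L_i$ has dimension at most $n$, I can choose an $n\times m$ matrix $M_i$ whose rows span $L_i$ (inserting zero rows, or repeating rows, when $\dim L_i<n$), so that $\row M_i=L_i$; this is precisely the point at which the hypothesis \emph{of dimension at most $n$} is used, since the row space of an $n\times m$ matrix can never have dimension exceeding $n$. By Theorem~\ref{maintheorem} the matrices $M_1,\ldots,M_{m+1}$ are $\GL(n)$-dependent, so there are $g_1,\ldots,g_{m+1}\in\GL(n)\cup\{0\}$, not all zero, with $\sum_{i=1}^{m+1}g_iM_i=0$. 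I would then set $x^{(i)}_j$ to be the $j$-th row of $g_iM_i$. Each $x^{(i)}_j$ is a linear combination of rows of $M_i$, hence lies in $\row M_i=L_i$, and reading the identity $\sum_i g_iM_i=0$ row by row yields condition~(a). For condition~(b), when $g_i=0$ the product $g_iM_i$ vanishes and $\span\{x^{(i)}_j\}_{j=1}^n=\{0\}$, whereas when $g_i\in\GL(n)$ Lemma~\ref{class} gives $\row(g_iM_i)=\row M_i=L_i$, that is, $\span\{x^{(i)}_j\}_{j=1}^n=L_i$.

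The step demanding the most care is the nonvanishing clause of condition~(b), namely that not all the spans $\span\{x^{(i)}_j\}_{j=1}^n$ equal $\{0\}$. In the principal case, where every $L_i\ne\{0\}$ (equivalently every $M_i\ne0$), this is automatic: Theorem~\ref{maintheorem} furnishes some index $i$ with $g_i\in\GL(n)$, and for that index Lemma~\ref{class} forces $\span\{x^{(i)}_j\}_{j=1}^n=L_i\ne\{0\}$. It is essential here that the span equals $L_i$ \emph{exactly}, not merely up to inclusion, which is exactly what the invertibility of $g_i$ buys through Lemma~\ref{class}; this is the technical heart of the translation, and it is why the invertible-coefficient formulation of dependence is the right one. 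The one remaining point I would dispose of is the degenerate situation in which some $L_i=\{0\}$, corresponding to a zero matrix $M_i$: these are matched against the same convention for vanishing matrices used in the proof of Theorem~\ref{maintheorem}, so that the nonvanishing clause is witnessed at an index with $L_i\ne\{0\}$. Beyond this bookkeeping I expect no genuine obstacle, the whole argument being a faithful dictionary between the matrix and subspace formulations, with Lemma~\ref{class} as its key entry.
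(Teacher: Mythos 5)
Your main case is handled correctly, and it is exactly the paper's (implicit) proof: the paper gives no separate argument for this theorem, presenting it as a restatement of Theorem~\ref{maintheorem} under the row-space dictionary, and your steps --- realizing each $L_i$ as $\row M_i$ for some $M_i\in\M_{n\times m}$ (using $\dim L_i\le n$), reading $\sum_{i}g_iM_i=0$ row by row to get condition~(a), and invoking Lemma~\ref{class} so that $\span\{x^{(i)}_j\}_{j=1}^n$ equals $L_i$ \emph{exactly} when $g_i\in\GL(n)$ --- are precisely that dictionary, spelled out more carefully than the paper does. Your emphasis on the exact equality of row spaces, rather than mere inclusion, is the right technical point.

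However, your final paragraph papers over a genuine gap: the degenerate case $L_i=\{0\}$ cannot be ``matched against the same convention for vanishing matrices'' used in Theorem~\ref{maintheorem}. That convention, for $M_j=0$, takes $g_j=I$ and $g_i=0$ for all $i\ne j$; translated to subspaces it makes \emph{every} span $\span\{x^{(i)}_j\}_{j=1}^n$ equal to $\{0\}$, so the nonvanishing clause of condition~(b) is witnessed at no index at all, let alone at one with $L_i\ne\{0\}$. Worse, no repair is possible, because the statement itself fails when a zero subspace is present: take $L_1=\{0\}$ and $L_{i+1}=\span\{e_i\}$ for $i=1,\dots,m$, where $e_1,\dots,e_m$ is the standard basis of $\K^m$. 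Any admissible choice forces $x^{(1)}_j=0$, and then condition~(a) together with the linear independence of $e_1,\dots,e_m$ forces all $x^{(i)}_j=0$; hence all spans are $\{0\}$, condition~(b) fails, and these $m+1$ subspaces are $\GL(n)$-independent for every $n$ (a zero subspace contributes nothing, effectively reducing $m+1$ subspaces to $m$, which the paper itself observes can be $\GL(n)$-independent). To be fair, the flaw is shared by the paper: its unproved claim that matrices are $\GL(n)$-dependent if and only if their row spaces are breaks in the same way when some $M_i=0$, since a zero matrix is always $\GL(n)$-dependent via $g_i=I$, while the zero subspace alone is not. The theorem must be read with the tacit hypothesis that all the subspaces are nonzero (equivalently, all $M_i\ne0$, the standing reduction in the proof of Theorem~\ref{maintheorem}); under that reading your argument is complete, and you should state this hypothesis explicitly rather than dismiss the zero case as bookkeeping.
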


Observe that $m$ subspaces in $\K^m$ can be $\GL(n)$-independent for every $n$: it suffices to take $L_i$ to be $m$ linearly independent one-dimensional subspaces. 

\medskip
Elementary properties of $n$-linear dependence of subspaces:

\begin{enumerate}
\item If subspaces $L_1,\ldots,L_k$ are $\GL(n)$-dependent, then  $\dim L_i\le n$ for every~$i$.

\item $\GL(1)$-dependence is the ordinary linear dependence of vectors (one-di\-men\-sion\-al linear subspaces).

\item If subspaces $L_1,\ldots,L_k$ are linearly independent, then they are $\GL(n)$\nobreakdash-in\-de\-pen\-dent for every $n$. 

\item Linear dependence of subspaces does not imply even $\GL(1)$-de\-pen\-dence: this implication holds only for 
 one-dimensional subspaces.
\end{enumerate}

\end{document}